\newtheorem{theorem}{Theorem}
\theoremstyle{plain}
\newtheorem{definition}{Definition}
\newtheorem{example}{Example}
\newtheorem{lemma}{Lemma}
\newtheorem{remark}{Remark}
\numberwithin{equation}{section}
\begin{document}
\title[ Fixed points of $\left( \lambda ,\rho \right) $-firmly nonexpansive
mappings]{Approximating fixed points of $\left( \lambda ,\rho \right) $%
-firmly nonexpansive mappings in modular function spaces}
\author{Safeer Hussain Khan}
\address{Safeer Hussain Khan, Department of Mathematics, Stististics and
Physics, Qatar University, Doha 2713, Qatar}
\email{safeer@qu.edu.qa ; safeerhussain5@yahoo.com }
\subjclass[2000]{\ 46A80, 47H09, 47H10}
\keywords{Fixed point, $\left( \lambda ,\rho \right) $-firmly nonexpansive
mapping, iterative process, modular function space.}

\begin{abstract}
In this paper, we first introduce an iterative process in modular function
spaces and then extend the idea of a $\lambda $-firmly nonexpansive mapping
from Banach spaces to modular function spaces. We call such mappings as $%
\left( \lambda ,\rho \right) $-firmly nonexpansive mappings. We incorporate
the two ideas to approximate fixed points of $\left( \lambda ,\rho \right) $%
-firmly nonexpansive mappings using the above mentioned iterative process in
modular function spaces. We give an example to validate our results.
\end{abstract}

\maketitle

\section{Introduction}

Fixed point theory has several applications in different disciplines and
therefore it has been a flourishing area of research. The metric fixed pint
theory in the framework of Banach spaces usually involves a close link of
geometric and topological conditions. Fixed point theory in modular function
spaces and metric fixed point theory are near relatives because former
provides modular equivalents of norm and metric concepts. Modular spaces are
extensions of the classical Lebesgue and Orlicz spaces, and in many
instances conditions cast in this framework are more natural and more easily
verified than their metric analogs. For more discussion, see for example,
Khamsi and Kozlowski \cite{MAKhamsi}.

Nowadays, a vigorous research activity is developed in the area of numerical
reckoning fixed points for suitable classes of nonlinear operators: see, for
example, \cite{TTP1, TTP2} , and applications to image recovery and
variational inequalities: see \cite{YPLY, YAPL, YLP, YLPZ}. Existence of
fixed points in\ modular function spaces has been studied by many
researchers, for example, Khamsi and Kozlowski \cite{MAKhamsi} and the
references therein. Dhompongsa et al. \cite{DBKP} have proved the existence
of fixed point of $\rho $-contractions under certain conditions. Buthina and
Kozlowski \cite{BK}, for the first time, proved results on approximating
fixed points in modular function spaces through Mann and Ishikawa iterative
processes. Some work for multivalued mappings in modular function spaces
using Mann iterative process was done by Khan and Abbas \cite{KA}. Khan \cite%
{safeer} introduced an iterative process for approximation of fixed points
of certain mappings in Banach spaces. This process is independent of both
Mann and Ishikawa iterative processes in the sense that neither reduces to
the other under the given conditions. Moreover, it is faster than all of
Picard, Mann and Ishikawa iterative processes in case of contractions \cite%
{safeer}. We extend this process to the framework of modular function
spaces. On the other hand, $\lambda $-firmly nonexpansive mappings in Banach
spaces{\footnotesize \ }have attracted many researchers. For a discussion on
such mappings, see for example Ruiz et al. \cite{Ruiz} and the references
cited therein. As far as we know, no work has been done until now on this
kind of mappings in modular function spaces. We thus introduce the idea of
the so-called $\left( \lambda ,\rho \right) $-firmly nonexpansive mappings,
in short $\left( \lambda ,\rho \right) $-FNEM. We approximate the fixed
points of such mappings using the above mentioned iterative process in
modular function spaces. This will create new results in modular function
spaces.

\section{Preliminaries}

Here is a brief note on modular function spaces to make the discussion
self-contained. This has mainly been extracted from Khamsi and Kozlowski 
\cite{MAKhamsi}.

Let $\Omega $ be a nonempty set and $\Sigma $ a nontrivial $\sigma $-algebra
of subsets of $\Omega .$ Let $\mathcal{P}$ be a $\delta $-ring of subsets of 
$\Omega ,$ such that $E\cap A\in \mathcal{P}$ for any $E\in \mathcal{P}$ and 
$A\in $ $\Sigma .$ Let us assume that there exists an increasing sequence of
sets $K_{n}\in \mathcal{P}$ such that $\Omega =\cup K_{n}$ (for instance, $%
\mathcal{P}$ can be the class of sets of finite measure in a $\sigma $%
-finite measure space). By $1_{A},$ we denote the characteristic function of
the set $A$ in $\Omega .$ By $\mathcal{E}$ we denote the linear space of all
simple functions with supports from $\mathcal{P}$. By $\mathcal{M}_{\infty }$
we will denote the space of all extended measurable functions, i.e., all
functions $f:\Omega \rightarrow \lbrack -\infty ,\infty ]$ such that there
exists a sequence $\{g_{n}\}\subset \mathcal{E},$ $\left\vert
g_{n}\right\vert \leq \left\vert f\right\vert $ and $g_{n}(\omega
)\rightarrow f(\omega )$ for all $\omega \in \Omega .$

\begin{definition}
Let $\rho :\mathcal{M}_{\infty }\rightarrow \lbrack 0,\infty ]$ be a
nontrivial, convex and even function. We say that $\rho $ is a regular
convex function pseudomodular if

\begin{enumerate}
\item $\rho (0)=0;$

\item $\rho $ is monotone, i.e., $\left\vert f(\omega )\right\vert \leq
\left\vert g(\omega )\right\vert $ for any $\omega \in \Omega $ implies $%
\rho (f)\leq \rho (g),$ where $f,g\in \mathcal{M}_{\infty };$

\item $\rho $ is orthogonally subadditive, i.e., $\rho (f1_{A\cup B})\leq
\rho (f1_{A})+\rho (f1_{B})$ for any $A,B\in \Sigma $ such that $A\cap B\neq
\phi ,$ $f\in \mathcal{M}_{\infty };$

\item $\rho $ has Fatou property, i.e., $\left\vert f_{n}(\omega
)\right\vert \uparrow \left\vert f(\omega )\right\vert $ for all $\omega \in
\Omega $ implies $\rho (f_{n})\uparrow \rho (f),$ where $f\in \mathcal{M}%
_{\infty };$

\item $\rho $ is order continuous in $\mathcal{E},$ i.e., $g_{n}\in \mathcal{%
E},$ and\ $\ \ \left\vert g_{n}(\omega )\right\vert $~$\downarrow 0$ implies 
$\rho (g_{n})\downarrow 0.$
\end{enumerate}
\end{definition}

A set $A\in $ $\Sigma $ is said to be $\rho $-null if $\rho (g1_{A})=0$ for
every $g\in \mathcal{E}.$ A property $p(\omega )$ is said to hold $\rho $%
-almost everywhere ($\rho $-a.e.) if the set $\{\omega \in \Omega :p(\omega
) $ does not hold$\}$ is $\rho $-null. As usual,we identify any pair of
measurable sets whose symmetric difference is $\rho $-null as well as any
pair of measurable functions differing only on a $\rho $-null set. With this
in mind we define%
\begin{equation*}
\mathcal{M}\left( \Omega ,\Sigma ,\mathcal{P},\rho \right) =\left\{ f\in 
\mathcal{M}_{\infty }:\left\vert f(\omega )\right\vert <\infty ~\rho \text{%
-a.e.}\right\} ,
\end{equation*}%
where $f\in \mathcal{M}\left( \Omega ,\Sigma ,\mathcal{P},\rho \right) $ is
actually an equivalence class of functions equal $\rho $-a.e. rather than an
individual function. Where no confusion exists, we will write $\mathcal{M}$
instead of $\mathcal{M(}\Omega ,\Sigma ,\mathcal{P},\rho ).$

It is easy to see that $\rho :$ $\mathcal{M\rightarrow }[0,\infty ]$
posseses the following properties:

$1.$ $\rho (0)=0$ iff $f=0\ \rho $-a.e.

$2.$ $\rho (\alpha f)=\rho (f)$ for every scalar $\alpha $ with $\
\left\vert \alpha \right\vert =1$ and $f\in \mathcal{M}.$

$3.$ $\rho (\alpha f+\beta g)\leq \rho (f)+\rho (g)$ if $\alpha +\beta =1,$ $%
\alpha ,\beta \geq 0~$and $f,g\in \mathcal{M}.$

$\rho $ is\ called a convex modular if, in addition, the following property
is satisfied:

$3^{\prime }.$ $\rho (\alpha f+\beta g)\leq \alpha \rho (f)+\beta \rho (g)$
if $\alpha +\beta =1,$ $\alpha ,\beta \geq 0~$and $f,g\in \mathcal{M}.$

\begin{definition}
Let $\rho $ be a regular function pseudomodular. We say that $\rho $ is a
regular convex function modular if $\rho (f)=0$ implies $f=0\ \rho $-a.e.
\end{definition}

The class of all nonzero regular convex function modulars defined on $%
%TCIMACRO{\U{3a9} }%
%BeginExpansion
\Omega
%EndExpansion
$ is denoted by $\Re .$

\noindent The convex function modular $\rho $ defines the modular function
space $L_{\rho }$ as 
\begin{equation*}
L_{\rho }=\{f\in \mathcal{M}:\text{ }\rho (\lambda f)\rightarrow 0\ \text{as}%
\ \lambda \rightarrow 0\}.
\end{equation*}

\noindent Generally, the modular $\rho $ is not sub-additive and therefore
does not behave as a norm or a distance. However, the modular space $L_{\rho
}$ can be equipped with an $F$-norm defined by 
\begin{equation*}
\Vert f\Vert _{\rho }=\inf \{\alpha >0:\rho \left( \frac{f}{\alpha }\right)
\leq \alpha \}.
\end{equation*}%
In case $\rho $ is convex modular, 
\begin{equation*}
\Vert f\Vert _{\rho }=\inf \{\alpha >0:\rho \left( \frac{f}{\alpha }\right)
\leq 1\}
\end{equation*}%
defines a norm on the modular space $L_{\rho },$ and is called the Luxemburg
norm.

Define $L_{\rho }^{0}=\left\{ f\in L_{\rho }:\rho \left( f,.\right) \text{
is order continuous}\right\} $\ and the linear space\emph{\ }$E_{\rho
}=\left\{ f\in L_{\rho }:\lambda f\in L_{\rho }^{0}\text{ for every }\lambda
>0\right\} .$

\begin{definition}
$\rho \in \Re $ is said to satisfy the $\Delta _{2}$-condition, if \ $%
\sup_{n\geq 1}\rho (2f_{n},D_{k})\rightarrow 0$ as $k\rightarrow \infty $
whenever $\{D_{k}\}$ decreases to $\phi $ and $\sup_{n\geq 1}\rho
(f_{n},D_{k})\rightarrow 0$ as $k\rightarrow \infty .$
\end{definition}

If $\rho $\ is convex and satisfies the $\Delta _{2}$-condition, then$\
L_{\rho }=E_{\rho }.$ Moreover, $\rho $ satisfies the $\Delta _{2}$%
-condition if and only if $F$-norm convergence and modular convergence are
equivalent.

\begin{definition}
Let $\rho \in \Re .$

$(i)$ Let $r>0,~\varepsilon >0.$ Define 
\begin{equation*}
D_{1}(r,\epsilon )=\left\{ \left( f,g\right) :f,g\in L_{\rho },\rho (f)\leq
r,\rho (g)\leq r,\rho (f-g)\geq \varepsilon r\right\} .
\end{equation*}%
Let 
\begin{equation*}
\delta _{1}(r,\epsilon )=\inf \left\{ 1-\frac{1}{r}\rho (\frac{f+g}{2}%
):(f,g)\in D_{1}(r,\epsilon )\right\} \text{ if}\ \text{\ }D_{1}(r,\epsilon
)\neq \phi ,
\end{equation*}%
and $\delta _{1}(r,\epsilon )=1$ if $\ D_{1}(r,\epsilon )=\phi .$ We say
that $\rho $ satisfies $(UC1)$ if for every $r>0,\epsilon >0,$ $\delta
_{1}(r,\epsilon )>0.$ Note, that for every $r>0,D_{1}(r,\epsilon )\neq \phi
, $ for $\epsilon >0$ small enough.

$(ii)$ We say that $\rho $ satisfies $(UUC1)$ if for every $s\geq 0,\epsilon
>0,$ there exists $\eta _{1}(s,\epsilon )>0$ depending only upon $s$ and $%
\epsilon $ such that $\delta _{1}(r,\epsilon )>\eta _{1}(s,\epsilon )>0$ for
any $r>s.$
\end{definition}

Note that $(UC1)$ implies $(UUC1).$

\begin{definition}
\label{1.3} Let $\rho \in \Re .$ The sequence $\{f_{n}\}\subset L_{\rho }$
is called:

\begin{itemize}
\item $\rho $-convergent to $f\in L_{\rho }$ if $\rho (f_{n}-f)\rightarrow 0$
as $n$ $\rightarrow \infty .$

\item $\rho $-Cauchy, if $\rho (f_{n}-f_{m})\rightarrow 0$ as $n$ and $%
m\rightarrow \infty .$
\end{itemize}
\end{definition}

Note that, $\rho $-convergence does not imply $\rho $-Cauchy since $\rho $
does not satisfy the triangle inequality. In fact, one can show that this
will happen if and only if $\rho $ satisfies the $\Delta _{2}$-condition.

\begin{definition}
Let $\rho \in \Re .$ A subset $D\subset L_{\rho }$ is called
\end{definition}

\begin{itemize}
\item $\rho $-closed if the $\rho $-limit of a $\rho $-convergent sequence
of $D$ always belongs to $D.$

\item $\rho $-a.e. closed if the $\rho $-a.e. limit of a $\rho $-a.e.
convergent sequence of $D$ always belongs to $D.$

\item $\rho $-compact if every sequence in $D$ has a $\rho $-convergent
subsequence in $D.$

\item $\rho $-a.e. compact if every sequence in $D$ has a $\rho $-a.e.
convergent subsequence in $D.$

\item $\rho $-bounded if $diam_{\rho }(D)=\sup \{\rho (f-g):f,g\in
D\}<\infty .$
\end{itemize}

A sequence $\{t_{n}\}\subset (0,1)$ is called bounded away from $0$ if there
exists $a>0$ such that $t_{n}\geq a$ for every $n\in 
%TCIMACRO{\U{2115} }%
%BeginExpansion
\mathbb{N}
%EndExpansion
.$ Similarly, $\{t_{n}\}\subset (0,1)$ is called bounded away from $1$ if
there exists $b<1$ such that $t_{n}\leq b$ for every $n\in 
%TCIMACRO{\U{2115} }%
%BeginExpansion
\mathbb{N}
%EndExpansion
.$ The following lemma can be seen as an analogue of a famous lemma due to
Schu \cite{Schu} in Banach spaces.

\begin{lemma}
\cite[Lemma 4.1]{MAKhamsi}\label{Schu Type} \ Let $\rho \in \Re $ satisfy $%
(UUC1)$ and let $\{t_{k}\}\subset (0,1)$ be bounded away from $0$ and $1.$
If there exists $R>0$ such that%
\begin{equation*}
\limsup_{n\rightarrow \infty }\rho (f_{n})\leq R,\text{ }\limsup_{n%
\rightarrow \infty }\rho (g_{n})\leq R,
\end{equation*}%
and 
\begin{equation*}
\lim_{n\rightarrow \infty }\rho (t_{n}f_{n}+(1-t_{n})g_{n})=R,
\end{equation*}%
then%
\begin{equation*}
\lim_{n\rightarrow \infty }\rho (f_{n}-g_{n})=0.
\end{equation*}
\end{lemma}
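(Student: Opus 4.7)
The plan is to argue by contradiction. Suppose $\rho(f_n - g_n)\not\to 0$; then passing to a subsequence we may assume both $\rho(f_n - g_n)\geq \varepsilon_0$ for some $\varepsilon_0 > 0$ and $t_n \to t \in [a,b] \subset (0,1)$, where $a\leq t_n\leq b$. Writing $h_n := t_n f_n + (1-t_n) g_n$, the convexity property $3'$ together with the hypotheses already yields $\limsup_n \rho(h_n)\leq R$, which combined with the assumed limit gives $\lim_n \rho(h_n) = R$. The task is therefore to exhibit a uniform strict decrease of $\rho(h_n)$ strictly below $R$, contradicting this equality.

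The main device is $(UUC1)$ applied at the midpoint. Fix $\delta\in (0,1]$ and set $r := R+\delta$; for $n$ large, $\rho(f_n),\rho(g_n)\leq r$, and choosing $\epsilon := \varepsilon_0/(R+1)$ (so that $\epsilon r \leq \varepsilon_0 \leq \rho(f_n - g_n)$) puts $(f_n,g_n)\in D_1(r,\epsilon)$. Applying $(UUC1)$ with $s := R/2$ and this fixed $\epsilon$ furnishes a constant $\eta_* := \eta_1(R/2,\varepsilon_0/(R+1))>0$ with $\delta_1(r,\epsilon) > \eta_*$ for every $r > R/2$, whence
\begin{equation*}
\rho\!\left(\tfrac{f_n + g_n}{2}\right) \leq r\bigl(1-\eta_*\bigr) \qquad \text{for all large }n.
\end{equation*}

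The bridge from this midpoint estimate to $\rho(h_n)$ is a rewriting trick: if $t_n \leq 1/2$ then $h_n = 2t_n \cdot \tfrac{f_n+g_n}{2} + (1-2t_n)g_n$, which is a genuine convex combination, and if $t_n \geq 1/2$ then $h_n = (2t_n-1)f_n + (2-2t_n)\cdot\tfrac{f_n+g_n}{2}$. Applying property $3'$ to either decomposition and using the midpoint bound together with $\rho(f_n),\rho(g_n)\leq r$ gives $\rho(h_n)\leq r\bigl(1 - 2\min(t_n,1-t_n)\eta_*\bigr) \leq r(1 - 2c\eta_*)$ with $c := \min(a,1-b)>0$. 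Sending $n\to\infty$ yields $R \leq (R+\delta)(1-2c\eta_*)$, and then $\delta \to 0^+$ produces $R\leq R(1-2c\eta_*)$, i.e.\ $2cR\eta_*\leq 0$, a contradiction since $R,c,\eta_*>0$.

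The principal obstacle is the failure of subadditivity of $\rho$: property $3'$ controls only \emph{convex} combinations, so one cannot directly estimate $\rho(f_n - g_n)$ from $\rho(f_n)$ and $\rho(g_n)$, and $(UUC1)$ speaks only about the midpoint $\tfrac{f+g}{2}$. The crux is therefore the third step, where the arbitrary convex combination $h_n$ must be algebraically expressed as a convex combination in which $\tfrac{f_n+g_n}{2}$ appears as one entry. This is also the unique point at which both sides of the boundedness assumption on $\{t_n\}$ are used essentially, through the positive constant $c=\min(a,1-b)$.
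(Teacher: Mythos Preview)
The paper does not supply a proof of this lemma at all: it is quoted verbatim from Khamsi and Kozlowski \cite[Lemma~4.1]{MAKhamsi} and used as a black box in the proof of Theorem~\ref{thm}. There is therefore no ``paper's own proof'' to compare your attempt against.

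For what it is worth, your argument is sound and is essentially the standard one. A small cosmetic point: the clause ``and $t_n\to t\in[a,b]$'' obtained after passing to a subsequence is never used---you only need the uniform bounds $a\le t_n\le b$ to get $c=\min(a,1-b)>0$---so that compactness step can be dropped. Everything else (the choice of $\epsilon=\varepsilon_0/(R+1)$ so that $\epsilon r\le \varepsilon_0$ for $r=R+\delta$ with $\delta\le 1$, the $(UUC1)$ midpoint estimate, and the convex rewriting $h_n=2t_n\cdot\tfrac{f_n+g_n}{2}+(1-2t_n)g_n$ when $t_n\le 1/2$, and symmetrically otherwise) is correct and is the usual route to this Schu-type lemma in the modular setting.
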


\noindent A function $f\in L_{\rho }$ is called a fixed point of $T:L_{\rho
}\rightarrow L_{\rho }$ if $f=Tf.$ The set of all fixed points of $T$ will
be denoted by $F_{\rho }(T).$

The $\rho $-distance from an $f\in L_{\rho }$ to a set $D\subset L_{\rho }$
is given as follows: 
\begin{equation*}
dist_{\rho }(f,D)=\inf \{\rho (f-h):h\in D\}.
\end{equation*}

The following definition is a modular space version of the condition $(I)$
of Senter and Dotson \cite{SD}. Let $D\subset L_{\rho }.$ A mapping $%
T:D\rightarrow D$ is said to satisfy condition $(I)$ if there exists a
nondecreasing function $\ell :[0,\infty )\rightarrow \lbrack 0,\infty )$
with $\ell (0)=0,\;\ell (r)>0$ for all $r\in (0,\infty )$ such that 
\begin{equation*}
\rho (f-Tf)\geq \ell (dist_{\rho }(f,F_{\rho }(T))
\end{equation*}%
for all $f\in D.$

\begin{definition}
A mapping $T:D\rightarrow D$ is called $\rho $-nonexpansive mapping if 
\begin{equation*}
\rho (Tf-Tg)\leq \rho \left( f-g\right) \text{ for all }f,g\in D\text{ }.
\end{equation*}
\end{definition}

The folowing general theorem (\cite[Theorem 5.7]{MAKhamsi}) confirms the
existence fixed points of $\rho $-nonexpansive mappings.

\begin{theorem}
\label{Khamsi}Assume $\rho \in \Re \ $satisfy $(UUC1)$. Let $D$ be a $\rho $%
-closed, $\rho $-bounded convex and nonempty subset of $L_{\rho }.$ Then,
any $T:D\rightarrow D$ pointwise asymptotically nonexpansive mapping has a
fixed point. Moreover, the set of all fixed points $F(T)$ is $\rho $-closed
and convex.
\end{theorem}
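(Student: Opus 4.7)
The plan is to prove existence by the \emph{type-function} technique tailored to modular function spaces, and to deduce the structural properties of $F(T)$ from $(UUC1)$.

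\smallskip\noindent\textbf{Existence.} Fix any $f_{0}\in D$ and consider the orbit $\{T^{n}f_{0}\}\subset D$. Since $D$ is $\rho$-bounded, the type function
$$\tau(f):=\limsup_{n\to\infty}\rho(f-T^{n}f_{0})$$
is finite on $D$. I would carry out two moves. First, show $\tau$ attains its infimum $r_{\ast}$ on $D$ at a \emph{unique} point $z$. Uniqueness is a clean application of $(UUC1)$: if two minimizers $u\neq v$ existed, then along a subsequence where $\rho(u-T^{n}f_{0})$ and $\rho(v-T^{n}f_{0})$ are both close to $r_{\ast}$ while $\rho(u-v)$ is bounded below, the uniform-convexity estimate forces
$$\rho\Bigl(\tfrac{u+v}{2}-T^{n}f_{0}\Bigr)\le(1-\eta)\,\max\bigl\{\rho(u-T^{n}f_{0}),\rho(v-T^{n}f_{0})\bigr\}$$
for some $\eta>0$, giving $\tau(\tfrac{u+v}{2})<r_{\ast}$, a contradiction. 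Existence is extracted by taking a minimizing sequence $\{z_{k}\}\subset D$ and using $(UUC1)$ together with the Fatou property to show $\{z_{k}\}$ is $\rho$-Cauchy; $\rho$-closedness of $D$ then yields the limit $z\in D$. Second, show $Tz=z$: pointwise asymptotic nonexpansiveness gives $\rho(Tz-T^{n+1}f_{0})\le\alpha_{n}(z)\rho(z-T^{n}f_{0})$ with $\limsup_{n}\alpha_{n}(z)\le 1$, hence $\tau(Tz)\le\tau(z)=r_{\ast}$, and by uniqueness $Tz=z$.

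\smallskip\noindent\textbf{Closedness and convexity of $F(T)$.} Closedness follows from $\rho$-continuity: for $f_{n}\in F(T)$ with $\rho(f_{n}-f)\to 0$ and $f\in D$, the convexity property (3) gives $\rho(\tfrac{1}{2}(Tf-f_{n}))=\rho(\tfrac{1}{2}(Tf-Tf_{n}))\le\tfrac{1}{2}\rho(f-f_{n})\to 0$, and combining with $\rho(\tfrac{1}{2}(f-f_{n}))\to 0$ forces $Tf=f$. For convexity, take $u,v\in F(T)$ and set $w=\tfrac{u+v}{2}$. Iterate $T$ and use asymptotic nonexpansiveness to get $\rho(T^{n}w-u)=\rho(T^{n}w-T^{n}u)\le\alpha_{n}(w)\rho(w-u)\le\tfrac{1}{2}\alpha_{n}(w)\rho(u-v)$, and similarly for $v$. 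Applying $(UUC1)$ to the pair $(T^{n}w-u,T^{n}w-v)$, whose midpoint is $T^{n}w-w$, forces $\rho(T^{n}w-w)\to 0$; combined with $\rho$-continuity of $T$ this yields $Tw=w$.

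\smallskip\noindent\textbf{Main obstacle.} The hard part is the existence of the type-function minimizer. Modular function spaces lack a useful reflexivity, so no weak-compactness shortcut is available: every bit of compactness must be generated from $(UUC1)$ and the Fatou property by showing quantitatively that a minimizing sequence is $\rho$-Cauchy. The uniformity of the modulus in $(UUC1)$ relative to $s$, as opposed to mere $(UC1)$, is precisely what lets this quantitative argument run without the parameter $r$ drifting, and this is the technical heart of the proof.
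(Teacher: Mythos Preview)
The paper does not give its own proof of this theorem: it is quoted verbatim as \cite[Theorem~5.7]{MAKhamsi} and used only as a black box (via Lemma~\ref{FNtoN}) to guarantee that $F_{\rho}(T)$ is nonempty, $\rho$-closed, and convex for the class of mappings actually studied here. So there is nothing in the present paper to compare your argument against.

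That said, your sketch is exactly the type-function argument that Khamsi and Kozlowski use in the cited reference, and your identification of the ``main obstacle'' is accurate: under $(UUC1)$ alone (no $\Delta_{2}$, no reflexivity surrogate) one must manufacture $\rho$-Cauchyness of a minimizing sequence directly from the uniform modulus, and it is the uniformity in $s$ in $(UUC1)$---as opposed to mere $(UC1)$---that makes this go through. Two small points of care in your write-up: in the closedness step you wrote $\rho(\tfrac{1}{2}(Tf-Tf_{n}))\le\tfrac{1}{2}\rho(f-f_{n})$, but $T$ is only \emph{pointwise asymptotically} nonexpansive, so the correct bound is $\rho(Tf-Tf_{n})\le a_{1}(f)\rho(f-f_{n})$ with a constant $a_{1}(f)$ that need not be $\le 1$; this is harmless for the limit argument but should be stated correctly. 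Similarly, in the convexity step, passing from $\rho(T^{n}w-w)\to 0$ to $Tw=w$ requires a word about why $T$ is $\rho$-continuous (it is, being $\rho$-Lipschitz at each point with constant $a_{1}(\cdot)$), since the theorem does not assume $\Delta_{2}$.
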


\section{Fixed points approximation of $\left( \protect\lambda ,\protect\rho %
\right) $-FNEM}

We first extend the idea of a $\lambda $-firmly nonexpansive mapping from
Banach spaces to modular function spaces and call it $\left( \lambda ,\rho
\right) $-firmly nonexpansive mapping. We define the idea as follows.

\begin{definition}
Let $D\subset L_{\rho }.$ We say that a mapping $T:D\rightarrow D$ is called 
$\left( \lambda ,\rho \right) $-firmly nonexpansive mapping if for given $%
\lambda \in (0,1),$ 
\begin{equation*}
\rho (Tf-Tg)\leq \rho \left[ (1-\lambda )\left( f-g\right) +\lambda (Tf-Tg)%
\right] \text{ for all }f,g\in D\text{ }.
\end{equation*}
\end{definition}

For simplicity, we denote a $\left( \lambda ,\rho \right) $-firmly
nonexpansive mapping by $\left( \lambda ,\rho \right) $-FNEM.

\begin{lemma}
\label{FNtoN}$\left( \lambda ,\rho \right) $-firmly nonexpansivess implies $%
\rho $-nonexpansiveness.
\end{lemma}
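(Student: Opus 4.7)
The plan is to use convexity of the modular $\rho$ on the right-hand side of the defining inequality, and then to do a simple algebraic rearrangement to isolate $\rho(Tf-Tg)$ on the left.

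First I would write down the defining inequality of a $(\lambda,\rho)$-FNEM, namely
\[
\rho(Tf-Tg)\le \rho\bigl[(1-\lambda)(f-g)+\lambda(Tf-Tg)\bigr],
\]
and note that $(1-\lambda)+\lambda=1$ with both coefficients nonnegative, so property $3'$ of the convex modular $\rho$ applies to the argument on the right. Invoking convexity gives
\[
\rho\bigl[(1-\lambda)(f-g)+\lambda(Tf-Tg)\bigr]\le (1-\lambda)\rho(f-g)+\lambda\,\rho(Tf-Tg).
\]

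Chaining these two inequalities yields
\[
\rho(Tf-Tg)\le (1-\lambda)\rho(f-g)+\lambda\,\rho(Tf-Tg),
\]
and the proof finishes by subtracting $\lambda\,\rho(Tf-Tg)$ from both sides (legitimate when this quantity is finite) and dividing by $1-\lambda>0$, which is positive because $\lambda\in(0,1)$. This gives exactly $\rho(Tf-Tg)\le \rho(f-g)$, i.e.\ $\rho$-nonexpansiveness.

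The only subtlety, and really the only potential obstacle, is the case $\rho(Tf-Tg)=\infty$, where the subtraction step is not directly meaningful. However, in that case the chain of inequalities forces $(1-\lambda)\rho(f-g)+\lambda\rho(Tf-Tg)=\infty$; since $\lambda<1$, this requires $\rho(f-g)=\infty$ as well, and the desired inequality $\rho(Tf-Tg)\le\rho(f-g)$ holds trivially. Thus in every case we obtain $\rho$-nonexpansiveness, and the entire argument is a two-line application of convexity of $\rho$.
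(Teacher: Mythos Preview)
Your main argument is exactly the paper's: apply convexity of $\rho$ (property $3'$) to the defining inequality, then subtract $\lambda\,\rho(Tf-Tg)$ and divide by $1-\lambda>0$. The paper's proof is word-for-word this computation and does not discuss the case $\rho(Tf-Tg)=\infty$ at all.

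Your extra paragraph on that edge case, however, contains a slip. If $\rho(Tf-Tg)=\infty$, then since $\lambda>0$ the term $\lambda\,\rho(Tf-Tg)$ by itself is already $\infty$, so $(1-\lambda)\rho(f-g)+\lambda\,\rho(Tf-Tg)=\infty$ regardless of the value of $\rho(f-g)$; the hypothesis $\lambda<1$ plays no role here, and you cannot conclude $\rho(f-g)=\infty$. So the attempted extra rigor does not go through as written. In the paper's actual applications $D$ is always assumed $\rho$-bounded, so this edge case never arises, which is presumably why the paper simply ignores it.
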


\begin{proof}
Let $T:D\rightarrow D$ be $\left( \lambda ,\rho \right) $-firmly
nonexpansive mapping, then 
\begin{eqnarray*}
\rho (Tf-Tg) &\leq &\rho \left[ (1-\lambda )\left( f-g\right) +\lambda
(Tf-Tg)\right] \\
&\leq &(1-\lambda )\rho \left( f-g\right) +\lambda \rho (Tf-Tg)
\end{eqnarray*}%
for all $f,g\in D.$ This implies that $(1-\lambda )\rho (Tf-Tg)\leq
(1-\lambda )\rho \left( f-g\right) $ and hence $\rho (Tf-Tg)\leq \rho \left(
f-g\right) $ as $\lambda \neq 1.$
\end{proof}

\begin{lemma}
\label{ExistCC}The set of \ fixed points $F_{\rho }(T)$ of a $\left( \lambda
,\rho \right) $-firmly nonexpansive mapping is nonempty. Moreover, it is $%
\rho $-closed and convex.
\end{lemma}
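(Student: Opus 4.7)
The plan is to reduce everything to Theorem~\ref{Khamsi}, using Lemma~\ref{FNtoN} as the bridge. The statement is tacitly made under the standing hypotheses of that theorem, namely $\rho\in\Re$ satisfies $(UUC1)$ and $D\subset L_\rho$ is nonempty, $\rho$-closed, $\rho$-bounded, and convex; without something of this kind there is no hope of asserting $F_\rho(T)\neq\emptyset$, so these assumptions must be in force.

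First I would apply Lemma~\ref{FNtoN} to conclude that a $(\lambda,\rho)$-FNEM $T:D\to D$ is $\rho$-nonexpansive. A $\rho$-nonexpansive mapping is a particular case of a pointwise asymptotically $\rho$-nonexpansive mapping, obtained by taking the pointwise Lipschitz sequence to be identically equal to $1$. Theorem~\ref{Khamsi} then delivers in a single stroke all three conclusions: $F_\rho(T)$ is nonempty, $\rho$-closed, and convex.

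Should one prefer a more hands-on verification of $\rho$-closedness rather than reading it off from the theorem, it can be obtained directly from $\rho$-nonexpansiveness together with the convex-modular inequality (property $3'$). Given $\{f_n\}\subset F_\rho(T)$ with $\rho(f_n-f)\to 0$, one has
\[
\rho\!\left(\tfrac{1}{2}(Tf-f)\right)\le \tfrac{1}{2}\rho(Tf-Tf_n)+\tfrac{1}{2}\rho(f_n-f)\le \rho(f_n-f)\to 0,
\]
which forces $Tf=f$ because $\rho$ is a regular convex function modular.

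The main obstacle, and the reason for routing through Theorem~\ref{Khamsi} rather than arguing from scratch, is the convexity of $F_\rho(T)$: directly deducing $Th=h$ for $h=\alpha f+(1-\alpha)g$ with $f,g\in F_\rho(T)$ from the defining $(\lambda,\rho)$-firm inequality is not mechanical, since the inequality merely bounds $\rho(Th-f)$ by $\rho\bigl[(1-\lambda)(h-f)+\lambda(Th-f)\bigr]$ and an extra convex combination has to be unwound. By instead passing through pointwise asymptotic $\rho$-nonexpansiveness, convexity comes essentially for free, which is the cleanest route.
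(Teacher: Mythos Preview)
Your proposal is correct and follows exactly the paper's own route: invoke Lemma~\ref{FNtoN} to pass from $(\lambda,\rho)$-firm nonexpansiveness to $\rho$-nonexpansiveness, and then apply Theorem~\ref{Khamsi}. Your remark that the standing hypotheses of Theorem~\ref{Khamsi} must be tacitly assumed is well taken, since the lemma as stated omits them.
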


\begin{proof}
It follows from Lemma \ref{FNtoN} and Theorem \ref{Khamsi}.
\end{proof}

Next we introduce the following iterative process in the setting of modular
function spaces. For a mapping $T:D\rightarrow D,$ we\ define a sequence $%
\left\{ f_{n}\right\} $ by the following iterative process:

\begin{eqnarray}
f_{1} &\in &D,  \label{Def} \\
f_{n+1} &=&Tg_{n},  \notag \\
g_{n} &=&(1-\alpha _{n})f_{n}+\alpha _{n}Tf_{n},~n\in \mathbb{N}  \notag
\end{eqnarray}%
where $\left\{ \alpha _{n}\right\} \subset (0,1)$ is bounded away from both $%
0$ and $1.$

For details on a similar iterative process but in Banach spaces, see \cite%
{safeer}.

In this paper, using the above two ideas together, we prove our main result
for approximating fixed points in modular function spaces. We give a simple
numerical example to support and validate our results.

We are now in a position to give our main results as follows.

\begin{theorem}
\textit{\label{thm}}Let $\rho \in \Re \ $satisfy $(UUC1)$ and $\Delta _{2}$%
-condition. Let $D$ be a nonempty $\rho $-closed, $\rho $-bounded and convex
subset of $L_{\rho }.$ Let $T:D\rightarrow D~$be a $\left( \lambda ,\rho
\right) $-FNEM. Let $\left\{ f_{n}\right\} \subset D$ be defined\ by the
iterative process: Then 
\begin{equation*}
\lim_{n\rightarrow \infty }\rho (f_{n}-w)\text{ exists for all }w\in F_{\rho
}(T),
\end{equation*}%
and%
\begin{equation*}
\lim_{n\rightarrow \infty }\rho (f_{n}-Tf_{n})=0.
\end{equation*}%
\textbf{\ }
\end{theorem}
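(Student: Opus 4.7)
My plan is to first prove existence of the limit $\lim_n \rho(f_n-w)$ by showing the sequence is nonincreasing, and then deduce $\rho(f_n - Tf_n) \to 0$ using the Schu-type lemma (Lemma \ref{Schu Type}), with the degenerate case handled separately via the $\Delta_2$-condition.

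First I would note that Lemma \ref{ExistCC} guarantees $F_\rho(T) \neq \emptyset$, so the statement about $w\in F_\rho(T)$ is non-vacuous, and Lemma \ref{FNtoN} tells us $T$ is $\rho$-nonexpansive. Fix $w\in F_\rho(T)$. By convexity of $\rho$ and $\rho$-nonexpansiveness,
\begin{equation*}
\rho(g_n - w) \leq (1-\alpha_n)\rho(f_n - w) + \alpha_n \rho(Tf_n - Tw) \leq \rho(f_n - w),
\end{equation*}
and then $\rho(f_{n+1}-w) = \rho(Tg_n - Tw) \leq \rho(g_n - w) \leq \rho(f_n - w)$. Hence $\{\rho(f_n-w)\}$ is nonincreasing and bounded below, so $R := \lim_n \rho(f_n-w)$ exists. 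This settles the first conclusion.

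For the second conclusion, the sandwich $\rho(f_{n+1}-w) \leq \rho(g_n-w) \leq \rho(f_n-w)$ shows $\lim_n \rho(g_n-w) = R$ as well. Rewriting $g_n - w = \alpha_n(Tf_n-w) + (1-\alpha_n)(f_n-w)$, the hypotheses $\limsup \rho(Tf_n-w) \leq R$, $\limsup \rho(f_n-w) \leq R$, together with $\{\alpha_n\}$ bounded away from $0$ and $1$, let us invoke Lemma \ref{Schu Type} provided $R>0$; this gives $\rho(Tf_n - f_n) \to 0$.

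The one technical wrinkle, which I expect to be the only real obstacle, is the degenerate case $R=0$, since Lemma \ref{Schu Type} requires $R>0$. Here I would use the $\Delta_2$-condition: $\rho(f_n - w)\to 0$ is then equivalent to $\|f_n - w\|_\rho \to 0$, and similarly $\rho(Tf_n-w) = \rho(Tf_n - Tw) \leq \rho(f_n-w) \to 0$ gives $\|Tf_n - w\|_\rho \to 0$. The $F$-norm triangle inequality then yields $\|f_n - Tf_n\|_\rho \to 0$, and converting back via $\Delta_2$ gives $\rho(f_n - Tf_n)\to 0$. Combining the two cases finishes the proof.
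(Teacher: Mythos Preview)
Your proof is correct and follows essentially the same route as the paper: establish that $\{\rho(f_n-w)\}$ is nonincreasing via $\rho$-nonexpansiveness and convexity of $\rho$, sandwich to obtain $\lim_n \rho(g_n-w)=R$, and then apply the Schu-type Lemma~\ref{Schu Type}. You are in fact slightly more careful than the paper, which invokes Lemma~\ref{Schu Type} without separately treating the degenerate case $R=0$ (whereas the lemma as stated requires $R>0$); your use of the $\Delta_2$-condition to close that gap is correct.
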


\begin{proof}
Let $w\in F_{\rho }(T).$ To prove that $\lim_{n\rightarrow \infty }\rho
(f_{n}-w)$ exists for all $w\in F_{\rho }(T),$ consider 
\begin{eqnarray*}
\rho (f_{n+1}-w) &=&\rho \left( Tg_{n}-Tw\right) \\
&\leq &\rho \left[ (1-\lambda )\left( g_{n}-w\right) +\lambda \left(
Tg_{n}-Tw\right) \right] \\
&\leq &(1-\lambda )\rho \left( g_{n}-w\right) +\lambda \rho \left(
Tg_{n}-Tw\right) \text{ \ by convexity of }\rho .
\end{eqnarray*}

This implies $\rho \left( Tg_{n}-Tw\right) \leq \rho \left( g_{n}-w\right) $
and hence 
\begin{equation}
\rho (f_{n+1}-w)\leq \rho \left( g_{n}-w\right) .  \label{gf}
\end{equation}%
Also, because $T$ is a $\left( \lambda ,\rho \right) $-FNEM, 
\begin{equation*}
\rho \left( Tf_{n}-Tw\right) \leq (1-\lambda )\rho \left( f_{n}-w\right)
+\lambda \rho \left( Tf_{n}-Tw\right)
\end{equation*}%
implies $\rho \left( Tf_{n}-Tw\right) \leq \rho \left( f_{n}-w\right) ,$
therefore 
\begin{eqnarray*}
\rho (f_{n+1}-w) &\leq &\rho \left( g_{n}-w\right) \\
&=&\rho \lbrack (1-\alpha _{n})\rho \left( f_{n}-w\right) +\alpha _{n}\rho
\left( Tf_{n}-Tw\right) ] \\
&\leq &(1-\alpha _{n})\rho \left( f_{n}-w\right) +\alpha _{n}\rho \left(
Tf_{n}-Tw\right) \\
&\leq &(1-\alpha _{n})\rho \left( f_{n}-w\right) +\alpha _{n}\rho \left(
f_{n}-w\right) \\
&=&\rho \left( f_{n}-w\right) .
\end{eqnarray*}

Thus $\lim_{n\rightarrow \infty }\rho (f_{n}-w)$ exists for each $w\in
F_{\rho }(T).$

Suppose that 
\begin{equation}
\lim_{n\rightarrow \infty }\rho (f_{n}-w)\text{ }=m  \label{4}
\end{equation}%
where $m\geq 0.$

Note that the above calculations also give the following inequality:%
\begin{equation}
\rho \left( g_{n}-w\right) \leq \rho \left( f_{n}-w\right) .  \label{fg}
\end{equation}

Next, we prove that $\lim_{n\rightarrow \infty }\rho (f_{n}-Tf_{n})=0.$ Now
using \ref{fg}, \ref{gf} and \ref{4}, we have 
\begin{equation*}
m=\lim_{n\rightarrow \infty }\rho (f_{n}-w)=\lim_{n\rightarrow \infty }\rho
(\ g_{n}-w)\leq \rho \left( f_{n}-w\right) =m.
\end{equation*}%
This gives 
\begin{equation*}
\lim_{n\rightarrow \infty }\rho (\ g_{n}-w)=\ m.
\end{equation*}%
Moreover, 
\begin{equation}
\limsup_{n\rightarrow \infty }\rho (\ Tf_{n}-w)\leq \lim_{n\rightarrow
\infty }\rho (\ f_{n}-w)=\ m.  \label{5}
\end{equation}%
But then $\rho (f_{n+1}-w)\leq \rho (g_{n}-w)$ implies that 
\begin{eqnarray*}
\lim_{n\rightarrow \infty }\rho \left[ (1-\alpha _{n})(f_{n}-w)+\alpha
_{n}(Tf_{n}-w)\right] &=&\lim_{n\rightarrow \infty }\rho \left[ (1-\alpha
_{n})f_{n}+\alpha _{n}Tf_{n})-w\right] \\
&=&\lim_{n\rightarrow \infty }\rho (\ g_{n}-w) \\
&=&\ m.
\end{eqnarray*}%
Now by $\left( \ref{4}\right) ,\left( \ref{5}\right) $ and Lemma \ref{Schu
Type},$~$we have 
\begin{equation*}
\lim_{n\rightarrow \infty }\rho (f_{n}-Tf_{n}\ )=0.
\end{equation*}%
as required.
\end{proof}

Using the above result, we now prove our convergence result for
approximating fixed points of $\left( \lambda ,\rho \right) $-firmly
nonexpansive mappings in modular function spaces using our iterative process 
$\left( \ref{Def}\right) $ as follows.

\begin{theorem}
\textit{\label{thm4} }Let $\rho \in \Re \ $satisfy $(UUC1)$ and $\Delta _{2}$%
-condition. Let $D$ be a nonempty $\rho $-compact and convex subset of $%
L_{\rho }.$ Let $T:D\rightarrow D~$be a $\left( \lambda ,\rho \right) $%
-FNEM. Let $\ \left\{ f_{n}\right\} $ be as defined\ in Theorem \ref{thm}.
Then $\left\{ f_{n}\right\} $ $\rho $-converges to a fixed point of $T.$
\end{theorem}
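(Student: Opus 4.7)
The plan is to combine $\rho$-compactness of $D$ with the two conclusions of Theorem \ref{thm} to extract a subsequential limit that must be a fixed point, and then to upgrade this subsequential convergence to convergence of the full sequence.

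First I would invoke $\rho$-compactness of $D$ to extract a subsequence $\{f_{n_k}\}$ that $\rho$-converges to some $q\in D$. Theorem \ref{thm} already supplies both $\rho(f_n-Tf_n)\to 0$ and the existence of $\lim_{n\to\infty}\rho(f_n-w)$ for every $w\in F_\rho(T)$ (and $F_\rho(T)\neq\emptyset$ by Lemma \ref{ExistCC}). So the two remaining tasks are: show $q\in F_\rho(T)$, and then promote the subsequential limit to a full-sequence limit.

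The main obstacle is that $\rho$ itself lacks a triangle inequality, so I cannot directly splice $\rho(f_{n_k}-q)\to 0$ with $\rho(f_{n_k}-Tf_{n_k})\to 0$ to deduce $\rho(q-Tq)=0$. This is precisely where the $\Delta_2$-condition enters: it makes modular convergence equivalent to Luxemburg $F$-norm convergence, and the $F$-norm does satisfy the triangle inequality. Using this, I would write
\begin{equation*}
\|q-Tq\|_\rho \;\leq\; \|q-f_{n_k}\|_\rho + \|f_{n_k}-Tf_{n_k}\|_\rho + \|Tf_{n_k}-Tq\|_\rho
\end{equation*}
and handle each term. The first two tend to $0$ by the $\Delta_2$-equivalence applied to the two modular nullsequences above. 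For the third, Lemma \ref{FNtoN} gives that $T$ is $\rho$-nonexpansive, hence $\rho(Tf_{n_k}-Tq)\leq\rho(f_{n_k}-q)\to 0$, and once again $\Delta_2$ upgrades this to $\|Tf_{n_k}-Tq\|_\rho\to 0$. Therefore $\|q-Tq\|_\rho=0$, i.e.\ $q=Tq$, so $q\in F_\rho(T)$.

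With $q$ identified as a fixed point, Theorem \ref{thm} guarantees that $\lim_{n\to\infty}\rho(f_n-q)$ exists; since the subsequence already satisfies $\rho(f_{n_k}-q)\to 0$, the value of this limit must be $0$. Hence $\{f_n\}$ $\rho$-converges to $q\in F_\rho(T)$, as required. The only nontrivial ingredient is the use of $\Delta_2$ to bypass the missing triangle inequality for $\rho$; everything else is a mechanical assembly of Theorem \ref{thm}, Lemma \ref{FNtoN}, and $\rho$-compactness.
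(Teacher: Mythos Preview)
Your argument is correct and follows the same overall skeleton as the paper: extract a $\rho$-convergent subsequence by compactness, show its limit is a fixed point, then (implicitly in the paper, explicitly in your write-up) use the existence of $\lim_{n}\rho(f_n-q)$ from Theorem \ref{thm} to pass from subsequential to full convergence.

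The genuine difference is in how you circumvent the missing triangle inequality. The paper does not pass to the Luxemburg norm; instead it exploits convexity of $\rho$ directly by writing
\[
\rho\!\left(\tfrac{z-Tz}{3}\right)\le \tfrac{1}{3}\rho(z-f_{n_k})+\tfrac{1}{3}\rho(f_{n_k}-Tf_{n_k})+\tfrac{1}{3}\rho(Tf_{n_k}-Tz),
\]
and then uses $\rho$-nonexpansiveness on the last term. This ``divide by $3$'' device is the standard modular substitute for the triangle inequality and does not invoke $\Delta_2$ at all. Your route---using $\Delta_2$ to trade modular convergence for $F$-norm convergence and then applying the honest triangle inequality in $\|\cdot\|_\rho$---is equally valid and perhaps conceptually cleaner, but it consumes the $\Delta_2$ hypothesis in a place where the paper's argument does not need it. On the other hand, your proof is more careful than the paper's at the end: the paper stops once the subsequential limit $z$ is shown to be a fixed point and simply asserts convergence of $\{f_n\}$, whereas you close that gap properly by combining $\rho(f_{n_k}-q)\to 0$ with the existence of $\lim_n \rho(f_n-q)$.
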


\begin{proof}
Since $D$ is $\rho $-compact, there exists a subsequence $\left\{
f_{n_{k}}\right\} $ of $\left\{ f_{n}\right\} $ such that $%
\lim_{k\rightarrow \infty }\left( f_{n_{k}}-z\right) =0$ for some $z\in D.$
Since $T$ is a $\left( \lambda ,\rho \right) $-FNEM, using convexity of $%
\rho ,$ we have 
\begin{eqnarray*}
\rho \left( \frac{z-Tz}{3}\right) &=&\rho \left( \frac{z-f_{n_{k}}}{3}+\frac{%
f_{n_{k}}-Tf_{n_{k}}}{3}+\frac{Tf_{n_{k}}-Tz}{3}\right) \\
&\leq &\frac{1}{3}\rho (z-f_{n_{k}})+\frac{1}{3}\rho (f_{n_{k}}-Tf_{n_{k}})+%
\frac{1}{3}\rho (Tf_{n_{k}}-Tz) \\
&\leq &\rho (z-f_{n_{k}})+\rho (f_{n_{k}}-Tf_{n_{k}})+\rho (f_{n_{k}}-z) \\
&\leq &2\rho \left( z-f_{n_{k}}\right) +\rho (f_{n_{k}}-Tf_{n_{k}}).
\end{eqnarray*}

Applying Theorem \ref{thm}, $\lim_{n\rightarrow \infty }\rho
(f_{n_{k}}-Tf_{n_{k}})=0.$ That is, $\rho (\frac{z-Tz}{3})=0.$ Hence $z$ is
a fixed point of $T.$ That is, $\left\{ f_{n}\right\} $ $\rho $-converges to
a fixed point of $T.$
\end{proof}

\begin{theorem}
\label{SC} Let $\rho \in \Re \ $satisfy $(UUC1)~$and $\Delta _{2}$%
-condition. Let $D$ be a nonempty $\rho $-closed$,$ $\rho $-bounded and
convex subset of $L_{\rho }.$ Let $T:D\rightarrow D~$be a $\left( \lambda
,\rho \right) $-FNEM satisfying condition $(I).\ $Let $\left\{ f_{n}\right\} 
$ be\ as defined\ in Theorem \ref{thm}. Then $\left\{ f_{n}\right\} $ $\rho $%
-converges to a fixed point of $T.$
\end{theorem}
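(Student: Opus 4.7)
The plan is to bootstrap from Theorem~\ref{thm}, which supplies two ingredients: $\lim_{n\to\infty}\rho(f_n - w)$ exists for every $w \in F_\rho(T)$, and $\rho(f_n - Tf_n) \to 0$. By Lemma~\ref{ExistCC}, $F_\rho(T)$ is nonempty, $\rho$-closed and convex, so both statements are meaningful. Condition~$(I)$ will convert asymptotic regularity into control on the $\rho$-distance from $\{f_n\}$ to $F_\rho(T)$, and the $\Delta_2$-condition will upgrade that control to genuine $\rho$-convergence.

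First I would feed $\rho(f_n - Tf_n) \to 0$ into the inequality $\ell(dist_\rho(f_n, F_\rho(T))) \leq \rho(f_n - Tf_n)$ supplied by condition~$(I)$. Since $\ell$ is nondecreasing with $\ell(0) = 0$ and strictly positive on $(0,\infty)$, this forces $dist_\rho(f_n, F_\rho(T)) \to 0$. To show $\{f_n\}$ is $\rho$-Cauchy, for any $\varepsilon > 0$ I would pick $N$ large and some $w \in F_\rho(T)$ with $\rho(f_N - w) < \varepsilon$; the monotonicity $\rho(f_{n+1} - w) \leq \rho(f_n - w)$ established inside the proof of Theorem~\ref{thm} then propagates the bound to every $n \geq N$. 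Convexity of $\rho$ applied to $\tfrac{1}{2}(f_n - w) + \tfrac{1}{2}(w - f_m)$ yields a small value for $\rho\bigl((f_n - f_m)/2\bigr)$ whenever $n, m \geq N$, and the $\Delta_2$-condition converts this to $\rho(f_n - f_m) \to 0$. Because $L_\rho$ is $\rho$-complete under $\Delta_2$ and $D$ is $\rho$-closed, some $p \in D$ satisfies $f_n \to p$ in $\rho$.

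To identify $p$ as a fixed point I would pick $v_n \in F_\rho(T)$ realising the distance up to $1/n$, run the same convexity-plus-$\Delta_2$ midpoint trick on $v_n - p$ to conclude $v_n \to p$ in $\rho$, and invoke the $\rho$-closedness of $F_\rho(T)$ from Lemma~\ref{ExistCC}. The main obstacle throughout is that $\rho$ lacks a triangle inequality, so every ``triangle-like'' estimate has to be routed through midpoints and then rescaled using $\Delta_2$; ensuring that each of these scaling conversions is legitimate (given the precise form of $\Delta_2$ stated in the preliminaries) is the only delicate point, the rest being bookkeeping of facts already proved.
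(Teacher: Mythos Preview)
Your proposal is correct and follows essentially the same route as the paper: invoke Theorem~\ref{thm} for existence of $\lim_n\rho(f_n-w)$ and asymptotic regularity, feed the latter through condition~$(I)$ to get $dist_\rho(f_n,F_\rho(T))\to 0$, use the monotonicity $\rho(f_{n+1}-w)\le\rho(f_n-w)$ together with the convex midpoint estimate and $\Delta_2$ to obtain a $\rho$-Cauchy sequence, and then appeal to the $\rho$-closedness of $F_\rho(T)$ from Lemma~\ref{ExistCC}. The only cosmetic difference is in the final identification of the limit as a fixed point: the paper asserts directly that $dist_\rho(p,F_\rho(T))=\lim_n dist_\rho(f_n,F_\rho(T))=0$, whereas your auxiliary-sequence $\{v_n\}$ argument is a more explicit (and arguably cleaner) justification of that same step.
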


\begin{proof}
By Theorem \ref{thm}, $\lim_{n\rightarrow \infty }\rho (f_{n}-w)$ exists for
all $w\in F_{\rho }(T).\ $Suppose that $\lim\limits_{n\rightarrow \infty
}\rho \left( f_{n}-w\right) $ $=m>0$ because otherwise $\lim\limits_{n%
\rightarrow \infty }\rho \left( f_{n}-w\right) $ $=0$ means nothing left to
prove. Now by Theorem \ref{thm}, we have $\rho \left( f_{n+1}-w\right) \leq
\rho \left( f_{n}-w\right) \ $so that 
\begin{equation*}
dist_{\rho }(f_{n+1},F_{\rho }(T))\leq dist_{\rho }(f_{n},F_{\rho }(T)).
\end{equation*}%
This means that $\lim_{n\rightarrow \infty }dist_{\rho }(f_{n},F_{\rho }(T))$
exists. Applying condition$\;(I)$ and Theorem \ref{thm}, we have 
\begin{equation*}
\lim\limits_{n\rightarrow \infty }\ell (dist_{\rho }(f_{n},F_{\rho
}(T)))\leq \lim\limits_{n\rightarrow \infty }\rho (f_{n}-Tf_{n})=0.
\end{equation*}%
Since $\ell $\ is a nondecreasing function and $\ell (0)=0,$ therefore 
\begin{equation}
\lim\limits_{n\rightarrow \infty }dist_{\rho }(f_{n},F_{\rho }(T))=0.
\label{cs}
\end{equation}

To prove that $\{f_{n}\}$ is a $\rho $-Cauchy sequence in $D,$let $%
\varepsilon >0.$ By $\left( \ref{cs}\right) ,$ there exists a constant $%
n_{0} $ such that for all $n\geq n_{0,}$ 
\begin{equation*}
dist_{\rho }(f_{n},F_{\rho }(T))<\frac{\varepsilon }{2}.
\end{equation*}%
Hence there exists a $y$ $\in F_{\rho }(T)$ such that 
\begin{equation*}
\rho \left( f_{n_{0}}-y\right) <\varepsilon .
\end{equation*}%
Now for $m,n\geq n_{0},$ 
\begin{eqnarray*}
\rho \left( \frac{f_{n+m}-f_{n}}{2}\right) &\leq &\frac{1}{2}\rho \left(
f_{n+m}-y\right) +\frac{1}{2}\rho \left( f_{n}-y\right) \\
&\leq &\rho \left( f_{n_{0}}-y\right) \\
&<&\varepsilon .
\end{eqnarray*}%
By $\Delta _{2}$-condition, $\rho \left( f_{n+m}-f_{n}\right) <\varepsilon $
for $m,n\geq n_{0}.$ Hence $\{f_{n}\}$\ is a $\rho $-Cauchy sequence in a $%
\rho $-closed subset $D$ of $L_{\rho },$ and so it converges in $D.$ Let $%
\lim\limits_{n\rightarrow \infty }f_{n}=w.$ Then $dist_{\rho }(w,F_{\rho
}(T))=\lim\limits_{n\rightarrow \infty }dist_{\rho }(f_{n},F_{\rho }(T))=0$
by $\left( \ref{cs}\right) .$ Since by Lemma \ref{ExistCC} $F_{\rho }(T)$ is
closed, $w\in F_{\rho }(T).$That is, $\left\{ f_{n}\right\} $ $\rho $%
-converges to a fixed point of $T.$
\end{proof}

We now give the following example to show the Theorem \ref{SC} is indeed
valid.

\begin{example}
Let the set of real numbers $%
%TCIMACRO{\U{211d} }%
%BeginExpansion
\mathbb{R}
%EndExpansion
$ be the space modulared as$\rho (f)=\left\vert f\right\vert .$ It follows
that $\rho \in \Re $ satisfies $(UUC1)$ and $\Delta _{2}$-condition. Let $%
D=\left\{ f\in L_{\rho }:1\leq f<\infty \right\} .$ Define $T:D\rightarrow D$
as: 
\begin{equation*}
Tf=\frac{2f+1}{3}.
\end{equation*}%
Obviously $D$ is a $\rho $-compact subset of $%
%TCIMACRO{\U{211d} }%
%BeginExpansion
\mathbb{R}
%EndExpansion
.~$Note that $F_{\rho }(T)=\{1\}\neq \phi .$ Define a continuous
nondecreasing function $\ell :[0,\infty )\rightarrow \lbrack 0,\infty )$ by $%
\ell (r)=\dfrac{r}{6}.$ We first show that $T$ satisfies the Condition \ $I,$
that is, $\rho (f-Tf)\geq \ell (dist_{\rho }(f,F_{\rho }(T)))$ for all $f\in
D.$

Indeed, if $f\in F_{\rho }(T)=\{1\},$ then\ obviously 
\begin{equation*}
\rho (f-Tf)=0=\ell (dist_{\rho }(f,F_{\rho }(T))).
\end{equation*}%
If $f\in (1,\infty ),$ then%
\begin{eqnarray*}
\rho (f-Tf) &=&\rho \left( f-\left( \frac{2f+1}{3}\right) \right) \\
&=&\left\vert f-\left( \frac{2f+1}{3}\right) \right\vert \\
&=&\frac{f-1}{3},
\end{eqnarray*}%
and%
\begin{equation*}
\ell (dist_{\rho }(f,F_{\rho }(T)))=\ell (dist_{\rho }(f,\{1\}))=\ell
(\left\vert f-1\right\vert )=\frac{f-1}{6}.
\end{equation*}%
Thus $\rho (f-Tf)\geq \ell (dist_{\rho }(f,F_{\rho }(T)))$ for all $f\in D.$
We next show that $T$ is $\left( \lambda ,\rho \right) $-firmly
nonexpansive. Fix $\lambda =\frac{1}{3}.$Then 
\begin{eqnarray*}
\rho (Tf-Tg) &=&\left\vert Tf-Tg\right\vert \\
&=&\left\vert \frac{2f+1}{3}-\frac{2g+1}{3}\right\vert \\
&=&\frac{2}{3}\left\vert f-g\right\vert \\
&\leq &\frac{8}{9}\left\vert f-g\right\vert \\
&=&\left\vert \frac{2}{3}(f-g)+\frac{1}{3}\left[ \frac{2}{3}(f-g)\right]
\right\vert \\
&=&\left\vert \frac{2}{3}(f-g)+\frac{1}{3}(Tf-Tg)\right\vert \\
&=&\rho \left( \frac{2}{3}(f-g)+\frac{1}{3}(Tf-Tg)\right) .
\end{eqnarray*}%
Thus $T$ is $\left( \lambda ,\rho \right) $-firmly nonexpansive. Lastly, we
show that $\left\{ f_{n}\right\} $ $\rho $-converges to $1,$ the fixed point
of $T.$ For this, fix the starting point of the algorithm as $f_{1}=4$ and
choose $\alpha _{n}=\frac{1}{2}$ for all $n\in \mathbb{N}$ for simplicity.
Then $Tf_{n}=(2f_{n}+1)/3$ and $g_{n}=(0.5)\left( f_{n}+Tf_{n}\right) .$
\end{example}

\begin{tabular}{|l|l|l|l|l|}
\hline
$n$ & $f_{n}$ & $Tf_{n}$ & $g_{n}$ & $f_{n+1}=Tg_{n}$ \\ \hline
1 & 4.000000 & 3.000000 & 3.500000 & 2.666667 \\ \hline
2 & 2.666667 & 2.111111 & 2.388889 & 1.925926 \\ \hline
3 & 1.925926\qquad \qquad \qquad & 1.617284 & 1.771605 & 1.514403 \\ \hline
4 & 1.514403 & 1.342936\qquad \qquad & 1.428669 & 1.285780 \\ \hline
5 & 1.285780 & 1.190520\qquad \qquad & 1.238150 & 1.158766 \\ \hline
10 & 1.015124 & 1.010083 & 1.012603 & 1.008402 \\ \hline
15 & 1.000800\qquad \qquad \qquad & 1.000534 & 1.000667 & 1.000445 \\ \hline
20 & 1.000042\qquad \qquad \qquad & 1.000028 & 1.000035 & 1.000024 \\ \hline
22 & 1.000013\qquad \qquad \qquad & 1.000009 & 1.000011 & 1.000007 \\ \hline
\end{tabular}

\textit{The above table, created by using Microsoft Excel, shows that }$%
\left\{ f_{n}\right\} $\textit{\ }$\rho $\textit{-converges to }$1,$\textit{%
\ the fixed point of }$T,$\textit{\ to the accuracy of }$10^{-5}$\textit{\
on }$22$\textit{nd iteration. On further{}computations, the accuracy
increases to }$10^{-10}$\textit{\ on }$42$\textit{nd iteration.}

\begin{remark}
In the above example, $\left\{ f_{n}\right\} $ $\rho $-converges\ faster to $%
1$ if we take $\alpha _{n}$ near the fixed point. For example, if we take $%
\alpha _{n}=0.75,$ then the convergence to the accuracy of $10^{-5}$ is
obtained on $19$th iteration. But if we take $\alpha _{n}=0.25,$ far from $%
1, $ the required accuracy is achieved on $26$th iteration.
\end{remark}

\section{Concluding Remarks}

We have proved some strong convergence results using $\left( \lambda ,\rho
\right) $-firmly nonexpansive mappings on a faster iterative algorithm in
modular function spaces. In our opinion it would be interesting to consider
the following using above ideas:

(1) studying the stabiltiy and data dependency problems

(2) finding applications to general variational inequalities or equilibrium
problems as well as

to split feasibility problems.

We may suggest the redaer to combine the ideas studied in \cite{TTP1, TTP2,
YPLY,YAPL,YLP,YLPZ}


\begin{thebibliography}{99}
\bibitem{BK} Dehaish B. A. B., Kozlowski,W. M.: Fixed point iteration
processes for asymptotic pointwise nonexpansive mappings in modular function
spaces, Fixed Point Theory and Applications, 2012:118, (2012).

\bibitem{DBKP} Dhompongsa, S.,Benavides, T. D., Kaewcharoen, A.,
Panyanak,,B.:Fixed point theorems for multivalued mappings in modular
function spaces, Sci. Math. Japon. 139-147, (2006).

\bibitem{MAKhamsi} Khamsi, M. A., Kozlowski, W. M.: Fixed Point Theory in
Modular Function Spaces, Birkhauser, (2015).

\bibitem{safeer} Khan, S. H.: A Picard-Mann hybrid iterative process, Fixed
Point Theory and Applications, 2013, 2013:69 DOI: 10.1186/1687-1812-2013-69.

\bibitem{KA} Khan, S. H., Abbas, M.: Approximating fixed points of
multivalued rho- nonexpansive mappings in modular function spaces, Fixed
Point Theory and Applications, 2014, 2014:34, 9 pages.

\bibitem{Ruiz} Ruiz, D. A., Acedo,G. L., Marquez, V.M.: Firmly nonexpansive
mappings, Journal of nonlinear and convex analysis 15(1) , 61-87, (2014).

\bibitem{Schu} Schu, J.: Weak and strong convergence to fixed points of
asymptotically non expansive mappings,\ Bull. Aust. Math. Soc., 43, 153-159,
(1991).

\bibitem{SD} Senter, H.F., Dotson, W. G.: Approximating fixed points of
nonexpansive mappings, Proc. Am. Math. Soc., 44(2), 375--380 (1974).

\bibitem{TTP1} Thakur, B.S.; Thakur, D.; Postolache, M.: A new iterative
scheme for numerical reckoning xed points of Suzuki's generalized
nonexpansive mappings. Appl. Math. Comput. 275, 147-155 (2016).

\bibitem{TTP2} Thakur, B.S., Thakur, D., Postolache, M.: New iteration
scheme for numerical reckoning xed points of nonexpansive mappings. J.
Inequal. Appl. 2014, Art. No. 328 (2014).

\bibitem{YPLY} Yao, Y., Postolache, M, Liou, Y.C, Yao, Z: Construction
algorithms for a class of monotone variational inequalities. Optim. Lett.
10(2016), No. 7, 1519-1528.

\bibitem{YAPL} Yao, Y., Agarwal, R.P., Postolache, M.; Liu, Y.C.: Algorithms
with strong convergence for the split common solution of the feasibility
problem and xed point problem. Fixed Point Theory Appl. 2014, Art. No. 183
(2014).

\bibitem{YLP} Yao, Y., Liou, Y.C., Postolache, M.: Self-adaptive algorithms
for the split problem of the demicontractive operators. Optimization 2017,
DOI: 10.1080/02331934.2017.1390747.

\bibitem{YLPZ} Yao, Y., Leng, L., Postolache, M., Zheng, X.: Mann-type
iteration method for solving the split common fixed point problem. J.
Nonlinear Convex Anal. 18(5), 875-882 (2017).
\end{thebibliography}
\end{document}